\newtheorem{theorem}{Theorem}
\newtheorem{corollary}{Corollary}
\newtheorem{lemma}{Lemma}
\newtheorem{proposition}{Proposition}
\newtheorem{example}{Example}
\newtheorem{remark}{Remark}
\begin{document}

\author[1]{Artem Hulko}
\author[2]{Mark Whitmeyer\thanks{Email: \href{mailto:mark.whitmeyer@utexas.edu}{mark.whitmeyer@utexas.edu}}}
\affil[1]{Department of Mathematics and Statistics, University of North Carolina at Charlotte}
\affil[2]{Department of Economics, University of Texas at Austin}

\title{A Game of Nontransitive Dice}

\date{\today{}}

\maketitle

\begin{abstract}

We consider a two player simultaneous-move game where the two players each select any permissible $n$-sided die for a fixed integer $n$. A player wins if the outcome of his roll is greater than that of his opponent. Remarkably, for $n>3$, there is a unique Nash Equilibrium in pure strategies. The unique Nash Equilibrium is for each player to throw the Standard $n$-sided die, where each side has a different number. Our proof of uniqueness is constructive. We introduce an algorithm with which, for any nonstandard die, we may generate another die that beats it.

Keywords: Efron's Dice, Nontransitive Dice, Probability Paradoxes, Game Theory
\end{abstract}

\newpage
\section{Introduction}\setlength{\epigraphwidth}{3in} 
\epigraph{If Hercules and Lychas play at dice\\ Which is the better man, the greater throw\\ May turn by fortune from the weaker hand.}{William Shakespeare,\\ The Merchant of Venice}

Nontransitive dice are a fascinating topic in applied probability. They first came into the limelight as a result of a paper by Martin Gardner \cite{g970} and are one of a larger class of nontransitivity ``paradoxes" (see \cite{t976}, \cite{b972}), which also include the well-known Condorcet Voting Paradox, as described in \cite{f983}.

Recent papers published concerning nontransitive dice include \cite{a016}, \cite{c016}, and \cite{s994}. Another recent interpretation of the situation is \cite{h016}, which instead reinterprets the scenario through throws of unfair \textit{coins}. Additionally, the issue of nontransitive dice is the subject of a recent polymath project paper (see \cite{g017}), and indeed we borrow some terminology from that paper.

One problem that may be considered is how nontransitive dice should be played in a strategic interaction of two or more players. \cite{r001} was the first to explore this: it explores the two player game where each player could choose one of four specific\footnote{\cite{r001} formulates the problem using the four six-sided ``Efron Dice".} non-transitive symmetric dice and finds the set of equilibria. It then extends the analysis to cover the situation in which the players each choose two dice.

Here, we investigate a broader problem:  we consider a two-player, one-shot, simultaneous move game in which each player selects a general $n$-sided die and rolls it. The player with the highest face showing wins a reward, which we may normalize to $1$. The solution concept that we use for the game is that of a (pure strategy) Nash Equilibrium.\footnote{Note that this game is a constant-sum game; therefore equivalent to a zero sum game, and for two players the concept of a Nash Equilibrium is equivalent to that of a saddle point.} We show that for $n > 3$ there is a single, unique\footnote{There may be additional mixed strategy equilibria; however, in this analysis we focus exclusively on pure strategy equilibria. Henceforth, by Nash Equilibrium or equilibrium, we mean only those in pure strategies.} Nash Equilibrium in which both players play the \hyperlink{standard}{standard} die. Moreover, our proof of uniqueness is constructive and contains an algorithm that, for any non-standard die, generates a die that beats it. One notable implication of this is that for any non-standard die, there is at least one die that is the result of a \hyperlink{OS}{one-step}, the most elementary step of our algorithm, applied to the standard die, that beats it. That is, for any die that is not the standard die, there is a die very similar to the standard die that beats it.

The closest paper to this one, \cite{f007}, considers the same problem, where for some fixed integer $n$, two players each choose a die and roll against each other. It also shows that the \hyperlink{standard}{standard} die ties any other die in expectation, and that every nonstandard die loses to some other die. \cite{de} explores dice games as well in a slightly more general setting, and the existence and uniqueness of the the equilibrium in which both players each roll the standard die follows from Propositions 6 and 8 in that paper.

Our paper differs from \cite{f007} and \cite{de} in the following key ways. We provide different proofs of the existence and uniqueness of the Nash Equilibrium in the game\footnote{\cite{f007} do not explicitly show this, but it clearly follows from their Lemma 1 and Theorem 2.}, and we are able to do so using exclusively elementary mathematics. Additionally, our proof is constructive and we formulate a simple algorithm that allows us, for any non-standard die, to generate a die that beats it. Moreover, our last result--that for any non standard die, there is a die that beats it that is merely a one-step away from the standard die--is also novel.

Finally, dice games can be placed in a more general context, as a member of the family of Colonel Blotto games. First developed by E. Borel in 1921 (see \cite{borel}), a burgeoning literature has resulted, due to the game's general applications in economics, operations research, political science, and other areas. Some recent papers include \cite{rob} and \cite{hart}. In addition, recently, \cite{whit} explores a $n$ player continuous version of this game played on the interval $[0,1]$. For two players, the unique equilibrium is the continuous analog of the unique equilibrium here, the uniform distribution.

\section{The Basic Game}\label{tbg}

Define a general $n$-sided die (henceforth just ``die") as an integer-valued random variable $D_{n}$ that takes values in the finite set $\big\{1,2,...,n\big\}$, where the distribution must satisfy the following conditions:

\begin{enumerate}
\item For each possible value of $D_{n}$, $d_{i}$, the probability that it occurs, $p_{d_{i}}$, is a multiple of $1/n$.
\item \[\mathbb{E}_{p}(D_{n}) = \sum_{i=1}^{n}d_{i} p_{d_{i}} = \frac{n+1}{2}\]
\end{enumerate}
For a given $n$, denote the set of all $n$-sided dice by $\mathcal{D}_{n}$.

Then, a \hypertarget{standard}{standard} $n$ -sided die, $S_{n}$, is simply a die where each value occurs with equal probability, $1/n$. Naturally $S_{n} \in \mathcal{D}_{n}$.

We can represent any $n$-sided die, $D_{n} \in \mathcal{D}_{n}$, as a (discrete) uniformly distributed random-variable that takes values in the multiset of size $n$, with elements in $\big\{1,2,...,n\big\}$ and sum equal to $\frac{n(n+1)}{2}$:

\begin{equation}\label{eq:2}
\begin{split}
    D_{n} &= \big\{[d_{1}, d_{2},\dots,d_{n}]\big\}
\end{split}
\end{equation}

\begin{example}
\label{ex1}
The five $4$-sided dice are $S_{4} = \big\{[1,2,3,4]\big\}$, $X_{4} = \big\{[1,1,4,4]\big\}$, $Y_{4} = \big\{[2,2,2,4]\big\}$, $\big\{[1,3,3,3\big\}$, $\big\{[2,2,3,3]\big\}$.
\end{example}


\subsection{The Game}

Consider two players, Amy ($A$) and Bob ($B$). They play the following one shot game. Fix $n$, Amy and Bob each independently select any $n$-sided die, $A_{n}, B_{n} \in \mathcal{D}_{n}$ and then roll them against each other. Their expected payoffs are the probability that the realization of their roll is higher than the realization of their opponents roll\footnote{If the realizations of the rolls are the same, the winner is decided by a (fair) coin flip.}.

A \textbf{Strategy} for Amy, (and analogously for Bob), is simply a choice of die $A_{n} \in \mathcal{D}_{n}$. For any pair of strategies, $(A_{n},B_{n})$, Amy's expected payoff, $U_{A}(A_{n},B_{n})$, is 

\[
U_{A}(A_{n},B_{n}) = \Pr(A_{n}>B_{n}) + \frac{1}{2}\Pr(A_{n}=B_{n})
\]

\begin{example}\label{ex2}
Suppose $n = 4$ and let Amy and Bob choose dice $X_{4}$ and $Y_{4}$ from Example \ref{ex1}, respectively. Then $U_{A}(X_{4},Y_{4}) = 7/16$ and $U_{B}(X_{4},Y_{4}) = 9/16$.
\end{example}

The main result of this paper is the following theorem.

\begin{theorem}\label{thm1}
For any $n$, the unique Nash Equilibrium of the two player game is where both players play the standard die $S_{n}$. That is, the unique Nash Equilibrium is the strategy pair $(S_{n},S_{n})$.
\end{theorem}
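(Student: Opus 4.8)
The plan is to split the statement into three pieces: $(a)$ the profile $(S_n,S_n)$ is a Nash equilibrium; $(b)$ every die $D_n\neq S_n$ is strictly beaten by some die; $(c)$ these two facts, together with the constant-sum structure, force uniqueness. For $(a)$ I would compute $U_A(D_n,S_n)$ for an arbitrary $D_n=\{[d_1,\dots,d_n]\}\in\mathcal D_n$: conditioning on Bob's uniform roll, a face of Amy's die showing $d_i$ beats exactly $d_i-1$ of the $n$ equally likely outcomes and ties one, hence contributes $d_i-\tfrac12$ to Amy's expected win count, so by condition~2, $U_A(D_n,S_n)=\tfrac1{n^2}\big(\tfrac{n(n+1)}2-\tfrac n2\big)=\tfrac12$. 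Thus at $(S_n,S_n)$ each player earns $\tfrac12$ and no unilateral deviation improves on $\tfrac12$, so $(S_n,S_n)$ is a Nash equilibrium.

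For $(b)$ — the crux — I would first reformulate ``$E$ beats $D$''. Writing $F_D(k):=\#\{j:d_j\le k\}$ for the counting function of $D$ (with $F_D(0)=0$) and $m_E(v)$ for the number of faces of $E$ equal to $v$, a short computation gives $2n^2\big(U_A(E,D)-\tfrac12\big)=\sum_{v}m_E(v)\,\phi_D(v)-n^2$, where $\phi_D(v):=F_D(v)+F_D(v-1)$; hence $E$ beats $D$ iff $\sum_v m_E(v)\,\phi_D(v)>n^2$. A double-counting identity gives $\sum_{v=1}^n\phi_D(v)=n^2$ for every die, and since $\phi_{S_n}(v)=2v-1$ while $\sum_v v\,m_E(v)=\tfrac{n(n+1)}2$ and $\sum_v m_E(v)=n$ force $\sum_v m_E(v)\phi_{S_n}(v)=n^2$, subtracting shows $E$ beats $D$ iff $\sum_v m_E(v)\,\psi_D(v)>0$, where $\psi_D:=\phi_D-\phi_{S_n}$. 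The two features that make this tractable are $\sum_{v=1}^n\psi_D(v)=0$ and $\psi_D\equiv 0\iff D=S_n$ (a telescoping argument on $F_D(v)-v$, starting from $F_D(0)=0$); so for $D_n\neq S_n$ the integer-valued function $\psi_D$ is strictly positive somewhere and strictly negative somewhere.

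It then remains to produce, for each non-standard $D$, a die with $\sum_v m_E(v)\,\psi_D(v)>0$, and I would do so constructively as a single \emph{one-step} perturbation of $S_n$: replace two faces of $S_n$ valued $a,b$ by faces valued $a',b'$ with $a'+b'=a+b$ and $a',b'\in\{1,\dots,n\}$. Using $\sum_v\phi_{S_n}(v)=n^2$, the resulting die satisfies $\sum_v m_E(v)\,\psi_D(v)=\psi_D(a')+\psi_D(b')-\psi_D(a)-\psi_D(b)$, so it suffices to find a legal swap with $\psi_D(a')+\psi_D(b')>\psi_D(a)+\psi_D(b)$. Locating a value $c$ at which $\psi_D$ attains its maximum (which, after a small check, may be taken with $2\le c\le n$), the natural choice pushes one unit of face value up across a rise of $\psi_D$ and one unit down across a fall of $\psi_D$ — for instance the ``merge'' swap $\{c-1,c+1\}\mapsto\{c,c\}$ when $2\le c\le n-1$, with a boundary variant when $c\in\{2,n\}$. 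I expect this to be the main obstacle: one must verify that a legal, strictly improving one-step always exists, which splits into cases according to whether the peak of $\psi_D$ lies in the interior or at an endpoint of $\{1,\dots,n\}$ (with a few small values of $n$ checked by hand), and it is exactly this analysis that exposes the exceptional die $\{[2,2,2]\}$ at $n=3$, the one non-standard die beaten by nothing.

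Finally, for $(c)$: the game is constant-sum, and by $(a)$ playing $S_n$ guarantees each player $\tfrac12$. Let $(A_n,B_n)$ be any Nash equilibrium. Since Amy can secure $\tfrac12$ against $B_n$ by playing $S_n$, optimality gives $U_A(A_n,B_n)\ge\tfrac12$; if $B_n\neq S_n$ then by $(b)$ some die strictly beats $B_n$, so $U_A(A_n,B_n)>\tfrac12$, i.e. $U_B(A_n,B_n)<\tfrac12$. The symmetric argument shows that $A_n\neq S_n$ would force $U_A(A_n,B_n)<\tfrac12$; both cannot hold, so at least one of $A_n,B_n$ equals $S_n$, say $B_n=S_n$. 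Then $B_n$ being a best response to $A_n$ means $U_B(A_n,S_n)=\tfrac12\ge U_B(A_n,E_n)$ for every die $E_n$, i.e. no die beats $A_n$, which by $(b)$ forces $A_n=S_n$. Hence the unique Nash equilibrium is $(S_n,S_n)$, the cases $n\le 3$ being dispatched by inspection of the few available dice and the substantive content being $n>3$.
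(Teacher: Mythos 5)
Your overall architecture is sound and, modulo notation, it is the same as the paper's: part (a) reproduces Proposition \ref{prop1} exactly; part (c) is a cleaner, fully spelled-out version of the game-theoretic wrap-up the paper compresses into one sentence at the start of Proposition \ref{prop2}; and part (b) is the paper's construction in disguise. Indeed, your $\psi_D(v)=F_D(v)+F_D(v-1)-(2v-1)$ satisfies $\psi_D(k+1)-\psi_D(k)=\gamma_k+\gamma_{k+1}-2=\xi_k-2$, so ``find a one-step from $S_n$ that moves face value across a rise of $\psi_D$ and down across a fall'' is literally the paper's prescription ``find $\xi_i>\xi_j$ with $i\neq j+1$ and apply $\Phi$ to $S_n$.'' Your payoff identity, the normalization $\sum_v\phi_D(v)=n^2$, and the equivalence $\psi_D\equiv 0\iff D=S_n$ are all correct.

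The genuine gap is that you never prove the one statement on which everything hinges: that for every non-standard $D$ with $n\geq 4$ a \emph{legal}, strictly improving one-step exists. You gesture at the merge swap $\{c-1,c+1\}\mapsto\{c,c\}$ at a maximizer $c$ of $\psi_D$ and then write ``I expect this to be the main obstacle: one must verify\dots''---but that verification is the theorem. Concretely, the merge swap yields strict gain $2\psi_D(c)-\psi_D(c-1)-\psi_D(c+1)>0$ only if $c$ is an interior maximizer with at least one non-maximizing neighbour; it fails outright when the maximum of $\psi_D$ is attained only at the endpoints $1$ and/or $n$ (e.g.\ profiles of the shape of $X_4=\{[1,1,4,4]\}$, where $\psi$ peaks at $v=1$ and $v=4$), and in that regime one must fall back on a general pair of increments $\xi_i>\xi_j$ together with an argument that the single forbidden configuration $i=j+1$ can always be relabelled away. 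That is precisely the content of the paper's Lemma \ref{lem4} (the increments $\gamma_k+\gamma_{k+1}$ cannot all be equal for a non-standard die when $n\geq 4$, proved by the parity/counting equations \ref{eq:36}--\ref{eq:137}, and where $n=3$ genuinely escapes via $\{[2,2,2]\}$) plus the closing relabelling paragraph of Proposition \ref{prop2}. Until you supply that case analysis, part (b) is a plan rather than a proof, and with it the uniqueness claim.
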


We shall prove this theorem by proving two propositions: Proposition \ref{prop1}, that $(S_{n},S_{n})$ is a Nash Equilibrium; and Proposition \ref{prop2}, that $(S_{n},S_{n})$  is the unique equilibrium (for $n \geq 4$).

\begin{proposition}
\label{prop1} $(S_{n},S_{n})$ is a Nash Equilibrium.
\end{proposition}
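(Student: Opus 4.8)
The plan is to show that the standard die $S_n$ ties every die in $\mathcal{D}_n$ in expectation, i.e. $U_A(D_n, S_n) = 1/2$ for every $D_n \in \mathcal{D}_n$; once this is established, neither player can gain by deviating from $S_n$ when the opponent plays $S_n$, so $(S_n, S_n)$ is a Nash Equilibrium. First I would fix an arbitrary die $D_n = \{[d_1,\dots,d_n]\} \in \mathcal{D}_n$ and write out $U_A(D_n, S_n)$ explicitly using the uniform-multiset representation: since $S_n$ rolls each of $1,\dots,n$ with probability $1/n$ and $D_n$ rolls each $d_i$ with probability $1/n$,
\[
U_A(D_n, S_n) = \frac{1}{n^2} \sum_{i=1}^{n} \left( \#\{\,k : k < d_i\,\} + \tfrac{1}{2}\,\#\{\,k : k = d_i\,\} \right) = \frac{1}{n^2} \sum_{i=1}^{n} \left( (d_i - 1) + \tfrac{1}{2} \right) = \frac{1}{n^2}\sum_{i=1}^{n}\left(d_i - \tfrac{1}{2}\right).
\]
Here I used that among the faces $1,\dots,n$ of $S_n$ there are exactly $d_i - 1$ strictly below $d_i$ and exactly one equal to $d_i$. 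Then I would invoke the defining constraint on a die, $\sum_{i=1}^n d_i = n(n+1)/2$, to conclude
\[
U_A(D_n, S_n) = \frac{1}{n^2}\left( \frac{n(n+1)}{2} - \frac{n}{2} \right) = \frac{1}{n^2}\cdot\frac{n^2}{2} = \frac{1}{2}.
\]

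By the symmetry of the payoff structure the same computation gives $U_B(S_n, D_n) = 1/2$ for every $D_n$. In particular $U_A(S_n,S_n) = U_B(S_n,S_n) = 1/2$, and for any alternative die $A_n$ Amy might choose, $U_A(A_n, S_n) = 1/2 = U_A(S_n,S_n)$, so Amy has no profitable deviation; symmetrically for Bob. Hence $(S_n,S_n)$ is a Nash Equilibrium.

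There is no real obstacle here — the only thing to be careful about is handling the tie term correctly (the coin-flip convention contributes the $\tfrac12$ for the single face of $S_n$ equal to $d_i$), and noting that the mean constraint $\mathbb{E}_p(D_n) = (n+1)/2$ is exactly what makes the face-sum equal $n(n+1)/2$, which is the one fact that makes the sum telescope to $1/2$. This already shows the stronger statement that $S_n$ is a "fair" die against all comers, which will be reused when proving Proposition 2.
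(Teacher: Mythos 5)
Your proposal is correct and follows essentially the same route as the paper: condition on the face $d_i$ shown by $D_n$, note that $S_n$ has exactly $d_i-1$ faces below it and one tying face, and use the mean constraint $\sum_i d_i = n(n+1)/2$ to get $U_A(D_n,S_n)=1/2$ for every die, whence no deviation from $(S_n,S_n)$ is profitable. No further comment is needed.
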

\begin{proof}
First, we show that for all $i \in \big\{A,B\big\}$ and for all $D_{n} \in \mathcal{D}_{n}$, $U_{i}(S_{n},D_{n}) = U_{i}(D_{n},S_{n}) = \frac{1}{2}$.

Suppose player $B$ chooses the standard die, $S_{n}$ and player $A$ chooses an arbitrary die $D_{n} = \big\{[d_{1}, d_{2},\dots,d_{n}]\big\}$. If the realization of $D_{n}$ is $d_{i}$; that is, dice $D_{n}$ ``lands" showing face $d_{i}$, then with probability $(d_{i} - 1)/n$, $D_{n}$ beats the standard die, and with probability $1/n$, $D_{n}$ ties the standard die. Hence,

\[U_{A}(D_{n},S_{n}) = \sum_{i=1}^{n}\bigg(\frac{1}{n}\bigg)\bigg(\frac{d_{i}-1}{n} + \frac{1}{n}\cdot \frac{1}{2}\bigg) = \bigg(\frac{1}{n}\bigg)\bigg(\mathbb{E}[D_{n}\big] - \frac{1}{2}\bigg) = \frac{1}{2}\]

Since $(S_{n},S_{n})$ yields to player $B$ a payoff of $1/2$, there is no profitable deviation.
\end{proof}

For any die $D_{n} \in \mathcal{D}_{n}$ and any $i$ and $j$ such that $i \neq j$, $d_{i} \neq 1$ and $d_{j} \neq n$, a \hypertarget{OS}{\textbf{One-step}} from $D_{n}$ is $\Phi_{i,j}(D_{n})$ defined by

\[
\Phi_{i,j}(D_{n}) = \big\{[d_{1},\dots, d_{i-1}, d_{i}-1, d_{i+1},\dots, d_{j-1}, d_{j} + 1, d_{j+1}, \dots, d_{n}]\big\}
\]

Evidently, $\Phi_{i,j}(D_{n})$ satisfies the conditions in Equation \ref{eq:2} and so $\Phi_{i,j}(D_{n})$ is a die. We may make the following remark:

\begin{remark}
\label{rem1} Any die can be reached in a sequence of one-steps from any other die. This of course implies that any die can be reached in a sequence of one-steps from the standard die, $S_{n}$.
\end{remark}

For any two dice $A_{n}, B_{n} \in \mathcal{D}_{n}$, we say $A_{n}$ \textbf{Beats} $B_{n}$ if the number of pairs $(a_{i},b_{j})$ with $a_{i}>b_{j}$ exceeds the number of pairs with $a_{i}<b_{j}$.

It remains to show uniqueness, which we accomplish in the following lemma.

\begin{proposition}
\label{prop2} The Nash Equilibrium, $(S_{n},S_{n})$, is unique for $n \geq 4$.
\end{proposition}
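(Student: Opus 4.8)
The plan is to prove the contrapositive: for any non-standard die $D_n \in \mathcal{D}_n$, I will exhibit another die $E_n \in \mathcal{D}_n$ that beats $D_n$, so that $(D_n, D_n)$ cannot be an equilibrium (since playing $E_n$ against $D_n$ yields strictly more than $1/2$, while $D_n$ against $D_n$ yields exactly $1/2$ by symmetry). Since $(S_n, S_n)$ is already known to be an equilibrium by Proposition~\ref{prop1}, and every equilibrium of a symmetric constant-sum game must give each player $1/2$, ruling out $(D_n, D_n)$ for $D_n \neq S_n$ suffices — but I should also note that the equilibrium condition forces any equilibrium to be symmetric and of the form $(D_n, D_n)$ with $D_n$ a best response to itself, so it is enough to kill all such diagonal profiles.

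The heart of the argument is the promised algorithm. Given a non-standard die $D_n = \{[d_1,\dots,d_n]\}$ (sorted, say, in nondecreasing order), non-standardness means some value in $\{1,\dots,n\}$ is repeated and some value is missing. First I would handle the \textbf{one-step} case: show that there exist indices $i,j$ such that the one-step $\Phi_{i,j}(D_n)$ beats $D_n$. The natural choice is to take $d_i$ to be a value that appears with multiplicity $\geq 2$ and is ``too large'' and $d_j$ a value that is ``too small'' (or, to get the one-step from $S_n$ version, pick the largest repeated value to decrement and the smallest gap to fill). The key computation is to compare, for a candidate die $E_n$, the count of pairs $(e_i, d_j)$ with $e_i > d_j$ against those with $e_i < d_j$; since $E_n$ and $D_n$ share all but two entries, this difference reduces to a small local comparison involving only the changed coordinates and the multiset of $D_n$'s values lying strictly between the old and new values. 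I would show this local quantity is strictly positive for a well-chosen one-step. Then, to prove the \emph{last} claim in the introduction (a beating die one step from $S_n$), I would argue that decrementing at $S_n$'s repeated-in-$D_n$... — more precisely, that among all dice reachable by a single one-step from $S_n$, at least one beats $D_n$; this follows by choosing the one-step at $S_n$ that moves ``toward'' the standard die from $D_n$'s perspective, i.e.\ that increases a value $D_n$ over-weights at the bottom and decreases one it over-weights at the top.

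The main obstacle I anticipate is making the local pair-counting comparison clean and checking that the strict inequality genuinely holds for \emph{every} non-standard die, including awkward boundary cases — e.g.\ dice like $\{[2,2,2,4]\}$ where the repeated value is interior and adjacent to the missing values, or dice where the ``excess mass'' is spread thinly. I would organize this by defining, for a die $D_n$, its cumulative distribution counts $c_k := \#\{i : d_i \leq k\}$, noting $D_n = S_n$ iff $c_k = k$ for all $k$, and showing that if $c_k \neq k$ for some $k$ then $c_k > k$ for some $k$ (since the total is fixed), i.e.\ $D_n$ puts too much mass on $\{1,\dots,k\}$. Then the die that shifts one unit of that excess mass upward across the threshold $k$ is a one-step, and the beats-relation difference works out, after telescoping, to something like $2(c_k - k) > 0$ plus nonnegative terms — this is the calculation I would need to get exactly right. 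Once the one-step-from-$D_n$ statement is established, re-running the same threshold argument starting from $S_n$ itself (rather than from $D_n$) gives the one-step-from-$S_n$ refinement, since the relevant threshold $k$ where $D_n$ has excess is precisely where a one-step at $S_n$ should act.
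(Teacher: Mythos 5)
There is a genuine gap at the heart of your plan: the first concrete claim you set out to prove --- that for every non-standard $D_n$ there exist indices $i,j$ such that the one-step $\Phi_{i,j}(D_n)$ applied to $D_n$ \emph{itself} beats $D_n$ --- is false. Take $n=4$ and $D_4=\big\{[1,1,4,4]\big\}$. A one-step must decrement a face different from $1$ and increment a face different from $4$, so every admissible one-step decrements a $4$ to $3$ and increments a $1$ to $2$, producing the multiset $\big\{[1,2,3,4]\big\}=S_4$, which \emph{ties} $D_4$ (every die ties $S_4$, by the computation in Proposition~\ref{prop1}). So no one-step of $D_4$ beats $D_4$, and your ``first case'' cannot be established. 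The correct statement --- the one the paper actually proves --- is that every non-standard die is beaten by some one-step of the \emph{standard} die: here the one-step on $S_4$ turning a $1$ into a $2$ and a $3$ into a $2$ gives $\big\{[2,2,2,4]\big\}$, which beats $\big\{[1,1,4,4]\big\}$ with probability $9/16$. You do mention the one-step-from-$S_n$ version, but only as a ``refinement'' to be obtained by re-running the threshold argument after the (unprovable) one-step-from-$D_n$ step; moreover ``the same threshold argument starting from $S_n$'' does not parse, since $S_n$ has no excess mass at any threshold --- the one-step at $S_n$ must be chosen according to the \emph{opponent's} face multiplicities, which is a different computation from the one you sketch.

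That computation is exactly the one you flag as needing to be gotten right, and it is the load-bearing part of the proof. Writing $\gamma_k$ for the multiplicity of face $k$ in $B_n$ and $\xi_k=\gamma_k+\gamma_{k+1}$, the one-step on $S_n$ that raises face $i$ to $i+1$ and lowers face $j+1$ to $j$ changes the (wins minus losses) count against $B_n$ by exactly $\xi_i-\xi_j$. One then needs (a) a pair with $\xi_i>\xi_j$, which requires a genuinely combinatorial lemma --- that the $\xi_k$ cannot all be equal for a non-standard die when $n\ge 4$; this is precisely where the hypothesis $n\ge 4$ enters, and your outline never engages with it (for $n=3$ the die $\big\{[2,2,2]\big\}$ has all $\xi_k$ equal, is unbeaten, and uniqueness in fact fails) --- and (b) a check that such a pair can be chosen with $i\ne j+1$ so that the modification is a legal one-step. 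Neither (a) nor (b) appears in your proposal. A smaller, repairable wobble: equilibria need not be diagonal a priori, so rather than ``killing all diagonal profiles'' you should note that in any equilibrium both players earn exactly $1/2$ (each can guarantee $1/2$ with $S_n$), whence if either player's die were non-standard and beatable the opponent would deviate. That part is easily fixed; the one-step-from-$D_n$ claim is not.
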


\begin{proof}
Clearly, for any strategy pair, player $i$ has a profitable deviation if and only if there is a die that beats her opponents die. Our proof is constructive and we show that for any die $B_{n} \neq S_{n}$, we can construct a die, ${G}_{n}$, that beats it.

To that end, let $B_{n} = \big\{[b_{1},b_{2},\dots,b_{n}]\big\}$, and (recall) $S_{n} = \big\{[1,2,\dots,n]\big\}$. We introduce $\gamma_{k}$, defined as

\[
\begin{split}
\gamma_{k} &= |\big\{b_{i} |b_{i} = k \big\}|
\end{split}\]
for $k = 1,2,\dots,n$. Note that each $\gamma$ is a non-negative integer and that each satisfies

\[\begin{split}
\sum_{k=1}^{n}\gamma_{k} &= n
\end{split}\]

\[
\begin{split}
\sum_{k=1}^{n}k\gamma_{k} &= \frac{n(n+1)}{2}
\end{split}
\]
Next, for $k = 1, 2,\dots,n-1$ define $\xi_{k}$ as

\[
\xi_{k} = \gamma_{k} + \gamma_{k+1}
\]

To construct a die $G_{n}$ that beats $B_{n}$, we need simply find a pair $(\xi_{i}, \xi_{j})$ with $\xi_{i} > \xi_{j}$ (and so clearly $j \neq i$) and $i \neq j+1$. Then, simply add $1$ to $s_{i}$ and take $1$ away from $s_{j+1}$ (hence the need for $i \neq j+1$), and the resulting die will beat $B_{n}$.
\begin{example}
\label{ex3}
Suppose player $A$ chooses die $X_{4}$ from our previous examples, $X_{4} = \big\{[1,1,4,4]\big\}$. We have $\gamma_{1} = 2$, $\gamma_{2} = \gamma_{3} = 0$, and $\gamma_{4} = 2$, and so $\xi_{1} = 2$, $\xi_{2} = 0$, $\xi_{3} = 2$, and $\xi_{4} = 2$. Evidently $\xi_{1} > \xi_{2}$ and $1 = i \neq j+1 = 2+1 = 3$. Hence, we add $1$ to $s_{1}$ and subtract $1$ from $s_{3}$, yielding the die $Y_{4} = \big\{[2,2,2,4]\big\}$, which beats $X_{4}$. Indeed, should player $B$ choose $Y_{4}$ she would achieve a payoff of $9/16 > 1/2$.
\end{example}

Evidently, if $\xi_{a} \neq \xi_{b}$ for some $a$, $b$, then there must be some $i$, $j$ with $\xi_{i} > \xi_{j}$. Thus, we establish the following lemma:

\begin{lemma}
\label{lem4}
If $n \geq 4$, then for any non-standard $n-$sided die $\exists$ a pair $a,b \in \big\{1,2,...,n\big\}$, for which $\xi_{a} \neq \xi_{b}$.
\end{lemma}

\begin{proof}
Evidently $\xi_{a} = \xi_{b} \hspace{.5cm} \forall a, b \in \big\{1,2,\dots,n\big\}$ if and only if

\[
\gamma_{1} + \gamma_{2} = \gamma_{2} + \gamma_{3} = \gamma_{3} + \gamma_{4} = \cdots = \gamma_{n-1} + \gamma_{n}\]
which holds if and only if

\begin{equation}
\label{eq:34}
\begin{split}
\gamma_{1} &= \gamma_{3} = \cdots = \gamma_{k} \hspace{1cm} \forall \hspace{.2cm} \text{odd integers} \hspace{.1cm} k \in \big\{1,2,\dots,n\big\}\\
\gamma_{2} &= \gamma_{4} = \cdots = \gamma_{j} \hspace{1cm} \forall \hspace{.2cm} \text{even integers} \hspace{.1cm} j \in \big\{1,2,\dots,n\big\}\\
\end{split}
\end{equation}
We also have the following two relationships:

\begin{equation}
\label{eq:35}\begin{split}
\sum_{k \hspace{.2cm} \text{odd}}^{n}\gamma_{k} + \sum_{j \hspace{.2cm} \text{even}}^{n}\gamma_{j} &= n
\end{split}
\end{equation}
and
\begin{equation}
\label{eq:57}\begin{split}
\sum_{k \hspace{.2cm} \text{odd}}^{n}k \gamma_{k} + \sum_{j \hspace{.2cm} \text{even}}^{n}j \gamma_{j} &= \frac{n(n+1)}{2}
\end{split}
\end{equation}
We can combine equations \ref{eq:34} and \ref{eq:35} to obtain

\begin{equation}
\label{eq:36}\begin{split}
\frac{n+1}{2}\gamma_{1} + \frac{n-1}{2}\gamma_{2} &= n\\
\end{split}
\end{equation}
for odd $n$. For even $n$, equations \ref{eq:34} and \ref{eq:35} yield \[\frac{n}{2}\gamma_{1} + \frac{n}{2}\gamma_{2} = n\]
or
\begin{equation}
\label{eq:37}\begin{split}
\gamma_{1} + \gamma_{2} &=2
\end{split}
\end{equation}
Furthermore, from equations \ref{eq:34} and \ref{eq:57}, we have \[\frac{n^{2}}{4}\gamma_{1} + \frac{n(n+2)}{4}\gamma_{2} = \frac{n(n+1)}{2}\]
or
\begin{equation}
\label{eq:137}\begin{split}
\frac{n}{2}\gamma_{1} + \frac{n+2}{2}\gamma_{2} &=n+1\\
\end{split}
\end{equation}
for even $n$. Now observe, we cannot have both $\gamma_{1} \geq 1$ and $\gamma_{2} \geq 1$ since if one were equal to $1$ and the other were greater than $1$, this would violate \ref{eq:35}; and if they were both equal to $1$, then $B_{n} = S_{n}$, a contradiction. Thus, either $\gamma_{1}$ or $\gamma_{2}$ must be equal to $0$.

Suppose $n$ is odd and that $\gamma_{1} = 0$. From equation \ref{eq:36} we have $(n-1)\gamma_{2} = 2n$, which does not have a solution in integers $n$, $\gamma_{2}$ for $n > 3$. Next, suppose $n$ is odd and that $\gamma_{2} = 0$. From equation \ref{eq:36} we have $(n+1)\gamma_{1} = 2n$, which does not have a solution in integers $n$, $\gamma_{1}$ for $n > 1$. Thus, we conclude that $n$ cannot be odd.

Suppose $n$ is even and that $\gamma_{1} = 0$. From equations \ref{eq:37} and \ref{eq:137} we must have $\gamma_{2} = 2$ and $n+2 = n+1$, which is obviously a contradiction. Finally, suppose $n$ is even and that $\gamma_{2} = 0$. From equations \ref{eq:37} and \ref{eq:137} we must have $\gamma_{1} = 2$ and $n = n+1$, which is also a contradiction. Thus we have proved Lemma \ref{lem4}.
\end{proof}

To wrap up the proof of Proposition \ref{prop2} we need to verify that we cannot have the situation where the only pair $\xi_{i},\xi_{j}$ that satisfies $\xi_{i} > \xi_{j}$ occurs when $i = j+1$. To that end, suppose $\xi_{i} > \xi_{j}$ for $i = j+1$. First, let $j \neq 1$. Then, if $\xi_{j-1} \leq \xi_{j}$, relabel $j-1$ as $j'$, which yields $\xi_{i} > \xi_{j'}$ for $i \neq j'+1$. On the other hand, if $\xi_{j-1} > \xi_{j}$, relabel $j-1$ as $i'$, implying $\xi_{i'} > \xi_{j}$ for $i' \neq j+1$. Next, let $j = 1$. If $\xi_{i+1} \geq \xi_{i}$, relabel $i+1$ as $i'$, which yields $\xi_{i'} > \xi_{j}$ for $i' \neq j+1$. If, instead, $\xi_{i+1} < \xi_{i}$, relabel $i+1$ as $j'$, and thus we have $\xi_{i} > \xi_{j'}$ for $i \neq j'+1$.
\end{proof}

We may also write the following Corollary, which we have proved along the way.

\begin{corollary}
\label{corr2}
Let $n \geq 4$. Then, for any die $B_{n} \neq S_{n}$, $\exists$ a die $G_{n}$ that is the image of a one-step $\Phi_{i,j}$ on the standard die $S_{n}$ that beats $B_{n}$.
\end{corollary}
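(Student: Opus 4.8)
The plan is to recognize that this corollary is, up to one small gap, already established inside the proof of Proposition \ref{prop2}; what remains is to make explicit the claim asserted there that the die obtained by adding $1$ to the face $i$ of $S_n$ and subtracting $1$ from the face $j+1$ actually beats $B_n$ whenever $\xi_i > \xi_j$ and $i \neq j+1$.

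First I would invoke Lemma \ref{lem4}: for $n \geq 4$ and non-standard $B_n$ there is a pair $a,b$ with $\xi_a \neq \xi_b$, hence a pair $i,j$ with $\xi_i > \xi_j$; and then invoke the final paragraph of the proof of Proposition \ref{prop2}, which shows such a pair can be chosen with the additional property $i \neq j+1$. Because $\xi$ is indexed over $\{1,\dots,n-1\}$ we automatically have $1 \leq i \leq n-1$ and $1 \leq j+1 \leq n$, together with $i \neq j+1$, so $G_n := \Phi_{j+1,i}(S_n)$ is a well-defined die and, by definition, the image of a one-step applied to $S_n$.

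The one substantive step is verifying that $G_n$ beats $B_n$, which I would do by a pair count. For a die $D$ write $\Delta(D)$ for the number of ordered pairs (face of $D$, face of $B_n$) in which $D$'s face strictly exceeds $B_n$'s, minus the number in which it is strictly smaller; by definition $D$ beats $B_n$ iff $\Delta(D) > 0$, and by Proposition \ref{prop1} (which gives $U(S_n,B_n)=1/2$) we have $\Delta(S_n) = 0$. Now move from $S_n$ to $G_n$ by two single-face edits. Replacing one face of value $k$ by $k+1$ changes $\Delta$ by exactly $\gamma_k + \gamma_{k+1}$: the only opponent faces whose comparison changes are the $\gamma_k$ faces equal to $k$ (each tie becomes a win, $+1$) and the $\gamma_{k+1}$ faces equal to $k+1$ (each loss becomes a tie, $+1$); symmetrically, replacing one face of value $k+1$ by $k$ changes $\Delta$ by $-(\gamma_k + \gamma_{k+1})$. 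Taking $k=i$ for the increment and $k=j$ for the decrement---and using $i \neq j+1$ precisely to ensure these two edits act on distinct faces of $S_n$, so their effects on $\Delta$ simply add---yields $\Delta(G_n) = 0 + \xi_i - \xi_j > 0$. Hence $G_n$ beats $B_n$, proving the corollary.

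I do not expect a real obstacle: the existence of the indices is inherited word-for-word from the proof of Proposition \ref{prop2}, and the increment formula for $\Delta$ is a short case split on the value of a single opponent face. The only point demanding a bit of care is the bookkeeping around $i \neq j+1$---this is exactly the condition preventing the ``up'' edit and the ``down'' edit from landing on the same face of $S_n$---together with the (immediate) observations that a one-step keeps all faces in $\{1,\dots,n\}$ and preserves the mean, so $G_n$ is a legitimate die in $\mathcal{D}_n$.
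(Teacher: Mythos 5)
Your proposal is correct and follows essentially the same route as the paper: the corollary is exactly the construction carried out in the proof of Proposition \ref{prop2} (find $\xi_i>\xi_j$ with $i\neq j+1$ via Lemma \ref{lem4} and the relabelling argument, then perform the one-step on $S_n$). Your pair-counting computation $\Delta(G_n)=\xi_i-\xi_j>0$ is a welcome addition, since the paper asserts that the resulting die beats $B_n$ without writing out this verification, and your accounting of the role of $i\neq j+1$ and of well-definedness of $\Phi_{j+1,i}(S_n)$ is accurate.
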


Note that given some die $B_{n} \neq S_{n}$, the algorithm developed in our proof yields \textit{every} winning die (i.e. a die that beats $B_{n}$) that is a one-step away from $S_{n}$. Moreover, it is easy to see how by ``flipping" the algorithm we could also obtain the set of losing dice that are a one-step away from $S_{n}$. Furthermore, the algorithm also enables us to find the ``best" (and ``worst") die/dice a one-step away from $S_{n}$ to play versus $B_{n}$. The die or dice with the greatest net gain from the one-step would have the highest likelihood of beating $S_{n}$. In the same manner, the die or dice with the greatest (in absolute value) net loss from the one-step would have the lowest likelihood of beating $S_{n}$.


\end{document}